\newcommand{\1}{1\!\!\,{\rm I}}
\renewcommand{\lg}{\langle}
\newcommand{\rg}{\rangle}
\newcommand{\mbR}{{\mathbb R}}
\newcommand{\wt}{\widetilde}
\newcommand{\vf}{\varphi}
\newcommand{\ve}{\varepsilon}
\theoremstyle{plain}
\newtheorem{thm}{Theorem}
\newtheorem{lem}{Lemma}
\theoremstyle{definition}
\newtheorem{defn}{Definition}
\theoremstyle{remark}
\newcommand{\lln}{\mathop{\rm lln}}
\newcommand{\supp}{\mathop{\rm supp}}
\begin{document}\large
\renewcommand{\proofname}{Proof}
\begin{center}
{\Large\bf
Discrete time approximation of coalescing stochastic flows on the real line
}
\end{center}
\vskip20pt

I.I.Nishchenko

\vskip20pt

Abstract.   In this paper we have constructed an approximation for the Harris flow and the Arratia flow using a sequence of independent stationary Gaussian processes as a perturbation. We have established what should be the relationship between the step of approximation and smoothness of the covariance  of the perturbing processes in order to have convergence of the approximating functions to the Arratia flow.

\vskip20pt
AMS class: 60H10, 60G46
\vskip20pt

Key words:
Stochastic flow, stochastic differential equation, numerical approximation.

{\bf Introduction}. It is well-known \cite{1},  that the solution to the Cauchy
problem for SDE
\begin{equation}
\label{eq2}
\begin{cases}
dx(t)=a(x(t))dt+b(x(t))dw(t)\\
x(0)=u_0
\end{cases}
\end{equation}
with continuously differentiable
functions $a, b$   having bounded
derivatives, can be obtained via discrete time
approximation. Namely, if we define the sequence $\{x^m_n\}$   by the rule:
\begin{equation}
\label{eq2new}
x^m_0=x_0\in\mbR, \
x^m_{n+1}=x^m_n+\frac{1}{m}a(x^m_n)+\frac{1}{\sqrt{m}}b(x^m_n)\xi_n
\end{equation}
where $\{\xi_n, n\geq1\}$ is a sequence of independent standard
Gaussian random variables, then the random functions
$$
x_m(t)=
m\bigg(\frac{k+1}{m}-t\bigg)x^m_k +
m\bigg(t-\frac{k}{m}\bigg)x^m_{k +1}, \
t\in\bigg[
\frac{k}{m}; \frac{k+1}{m}
\bigg], k=0, \ldots, m-1
$$
weakly converge in $C([0, 1])$ to the solution of \eqref{eq2}.

In this paper we study similar to \eqref{eq2new} difference approximation
for coalescing stochastic flows.   As is
known \cite{2}, such flows are not generated by a Gaussian white noise in
the space of vector fields. In order to understand how the flow with
coalescence is arranged we can consider its difference approximation. As a
perturbation we select a sequence of Gaussian stationary processes. In
order to allow the coalescence of the trajectories of individual particles
in the limit, the covariance functions of these processes are chosen to be
less and less smooth at the origin. On the other hand, in order the limit
flow to preserve the order, the step of approximation must be sufficiently
small. The relationship between the step of approximation and smoothness of
the covariance of the perturbing processes explains to some extent the
structure of singular stochastic flows.

{\bf 1. SDE and stochastic flows on the real line}. The main object of the
article is the Harris flow of Brownian motions on $\mbR.$  Let $\vf$ be a
continuous real positive definite function on $\mbR$ such that $\vf(0)=1$ and $\vf$  is Lipschitz outside any neighborhood of zero.

\begin{defn}
\label{defn1}
The Harris flow with $\vf$ being its local characteristic  is a family
$\{x(u, \cdot); u\in\mbR\}$  of Brownian martingales with respect to the
joint filtration such, that

1) for every $u_1\leq u_2$  and $t\geq0$
$$
x(u_1, t)\leq x(u_2, t),
$$

2) the joint characteristics are:
$$
d\lg x(u_1, \cdot), x(u_2, \cdot)\rg(t)=\vf(x(u_1, t)-x(u_2, t))dt.
$$
\end{defn}

It is known that the Harris flow exists \cite{3}. If the function $\vf$ is
smooth enough, the Harris flow  can be obtained as a flow of solutions to SDE.
Namely, for a sequence of standard Wiener processes $\{w_k; k\geq1\}$
consider the following SDE
\begin{equation}
\label{eq3}
dx(u, t)=\sum^\infty_{k=1}a_k(x(u, t)) dw_k(t),
\end{equation}
where $a=(a_k)_{k\geq1}$ is a Lipschitz mapping from $\mbR$ to $l_2$  such
that
$$
\sum^\infty_{k=1}a^2_k\equiv1,
$$
and
$$
\sum^\infty_{k=1}a_k(u)a_k(v)=\vf(u-v).
$$

Then the flow corresponding to \eqref{eq3} is the Harris flow with the local characteristic $\vf$, and furthermore it is a flow of homeomorphisms.
  Note, that the Harris flow could be
coalescent \cite{3}  and, in this case may not be generated by SDE. By this
reason it is interesting to consider  discrete approximations for the
flow  built in a similar way as approximations to SDE. Consider
a sequence of independent stationary Gaussian processes $\{\xi_n(u);  u\in\mbR, n\geq1\}$
with zero mean and covariation function $\Gamma.$  Suppose, that
$\Gamma $ is continuous. Define a sequence of random mappings
$\{x_n; n\geq0\}$  by the rule
\begin{equation}
\label{eq4}
x_0(u)=u, \ x_{n+1}(u)=x_n(u)+\xi_{n+1}(x_n(u)), \ u\in\mbR.
\end{equation}
Note, that  the continuity of $\Gamma$ implies that the processes $\{\xi_n; n\geq1\}$
have measurable modifications. This allows to substitute $x_n$  into
$\xi_{n+1}.$  The independence of $\{\xi_n; n\geq1\}$  guarantees that
$\xi_{n+1}(x_n(u))$  does not depend on the choice of these modifications.
We will need the following description of one and two-point motions of $\{x_n; n\geq0\}$.

\begin{lem}
\label{lem1}
The sequences
$\{x_n(u); n\geq0\}$   and
$\{x_n(u_2)-x_n(u_1); n\geq0\}$
have the same distributions as the sequences $\{y_n(u); n\geq0\},$ $\{z_n(u); n\geq0\}$, which  are defined by the
following rules:
$$
y_0=u, \ y_{n+1}=y_n+\eta_n,
$$
$$
z_0=u_2-u_1, \ z_{n+1}=z_n+\sqrt{2\Gamma(0)-2\Gamma(z_n)}\eta_n,
$$
where $\{\eta_n; n\geq1\}$   is a sequence of independent standard normal variables.
\end{lem}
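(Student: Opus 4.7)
The plan is to verify both distributional identities by induction on $n$, using a conditioning argument that exploits the independence of the $\xi_k$'s together with the spatial stationarity of each $\xi_{n+1}$. Let $\cF_n=\sigma(\xi_1,\dots,\xi_n)$. A straightforward induction shows that $x_n(u)$ is $\cF_n$-measurable, while $\xi_{n+1}$ is independent of $\cF_n$. The central technical claim, to be justified via a Fubini-type/disintegration argument (legitimate because $\xi_{n+1}$ admits a measurable modification by continuity of $\Gamma$), is this: if $X$ is $\cF_n$-measurable and $\xi_{n+1}$ is independent of $\cF_n$, then the conditional law of $\xi_{n+1}(X)$ given $\cF_n$ is obtained by freezing $X$ to its realized value and taking the marginal law of $\xi_{n+1}$ at that point. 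Applied to the pair $(x_n(u_1),x_n(u_2))$ it gives that, conditional on $\cF_n$, the vector $(\xi_{n+1}(x_n(u_1)),\xi_{n+1}(x_n(u_2)))$ is centered Gaussian with two-point covariance determined by the random displacement $x_n(u_2)-x_n(u_1)$.

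For the one-point claim, this shows that the increments $x_{n+1}(u)-x_n(u)=\xi_{n+1}(x_n(u))$ are conditionally (hence unconditionally) i.i.d.\ Gaussian with variance $\Gamma(0)$, which, under the implicit normalization $\Gamma(0)=1$ needed to recover standard normal $\eta_n$, matches the recursion $y_{n+1}=y_n+\eta_n$.

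For the two-point claim, subtracting the recursions for $u_1$ and $u_2$ yields
\[
x_{n+1}(u_2)-x_{n+1}(u_1) = \bigl[x_n(u_2)-x_n(u_1)\bigr] + \bigl[\xi_{n+1}(x_n(u_2))-\xi_{n+1}(x_n(u_1))\bigr].
\]
By stationarity and evenness of $\Gamma$, the conditional variance of the second bracket is $2\Gamma(0)-2\Gamma(x_n(u_2)-x_n(u_1))$. Writing that increment as $\sqrt{2\Gamma(0)-2\Gamma(x_n(u_2)-x_n(u_1))}\,\eta_n$ with $\eta_n$ a standard normal independent of $\cF_n$ (available on any sufficiently rich probability space, or by taking $\eta_n$ as the normalized increment itself when the variance is strictly positive and defining it arbitrarily on the null set where it vanishes) reproduces the recursion defining $z_n$. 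An induction in $n$, pasting together the finite-dimensional distributions, completes the argument.

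The main obstacle is the technical substitution step: making rigorous that evaluating the independent stationary field $\xi_{n+1}$ at the $\cF_n$-measurable random point $x_n(u)$ produces a conditionally Gaussian law with the stated covariance structure. Once one has a jointly measurable version of $\xi_{n+1}$ and invokes the independence $\xi_{n+1}\perp\cF_n$, a standard disintegration reduces the claim to the known finite-dimensional distributions of $\xi_{n+1}$ at fixed points, after which both parts of the lemma follow mechanically.
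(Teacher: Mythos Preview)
Your approach is correct and is exactly what the paper indicates: it omits the proof, remarking only that it ``can be obtained easily by calculating conditional distributions of $x_{n+1}$ under given $x_0,\ldots,x_n$,'' which is precisely the conditioning/induction argument you carry out. Your explicit handling of the measurable-modification and substitution issues even goes a bit beyond what the paper sketches.
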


The proof of the lemma can be obtained easily by calculating
conditional distributions of $x_{n+1}$  under given $x_0, \ldots, x_n,$
and is omitted.

It follows from Lemma \ref{lem1}, that the sequence of random
mappings $\{x_n; n\geq0\}$  is similar to the  Harris flow. All its one-point
motions are Gaussian symmetric random walks. But  the  mappings
$x_n$  for $n\geq1$   are not monotone. In the next section we will prove
that any $m$-point motion of $\{x_n; n\geq0\}$   approximates  
the $m$-point motion of the Harris flow.

{\bf 2. $m$-point motions}. In this section we will consider the limit behavior of
$x_n$  under a suitable normalization. Let us define the random
functions
$$
\wt{x}_n(u, t)=
n\left(\frac{k+1}{n}-t\right)x_k(u)+
n\left(t-\frac{k}{n}\right)x_{k+1}(u), \
$$
$$
 u\in \mbR, \
t\in\left[\frac{k}{n}; \frac{k+1}{n}\right], k=0, \ldots, n-1.
$$
Our first result is related to the $n$-point motions of $\wt{x}_n.$

\begin{thm}
\label{thm1}
Let $\Gamma$ be continuous positive definite function on $\mbR$ such that $\Gamma(0)=1$ and $\Gamma$ has two continuous bounded derivatives. Suppose that $\wt{x}_n$ is built upon a sequence $\{\xi_k; k\geq1\}$  with covariance $\frac{1}{\sqrt{n}}\Gamma.$

Then for every $u_1,\ldots, u_l\in\mbR$ the random processes
$\{\wt{x}_n(u_j, \cdot), j=1, \ldots, l\}$
weakly converge in $C([0; 1], \mbR^l)$
to the $l$-point motion of the Harris flow with the local characteristic $\Gamma$.
\end{thm}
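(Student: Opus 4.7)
My strategy is the classical two-step pattern: prove tightness of the family $(\wt x_n(u_1,\cdot),\dots,\wt x_n(u_l,\cdot))$ in $C([0,1],\mbR^l)$, and then identify every subsequential weak limit as the unique solution of the $l$-point martingale problem associated with the Harris flow. Because $\Gamma$ is $C^2$ with bounded derivatives, a Bochner-type decomposition as in \eqref{eq3} produces a Lipschitz $a\colon\mbR\to l_2$ with $\sum_k a_k(u)a_k(v)=\Gamma(u-v)$; consequently the $l$-point motion of the Harris flow is the unique solution of the well-posed martingale problem with generator
$$
L f(v)=\tfrac12\sum_{i,j=1}^l\Gamma(v_i-v_j)\,\pt_i\pt_j f(v),\qquad f\in C_c^\infty(\mbR^l).
$$

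For tightness I would exploit the martingale decomposition $x_k(u_j)=u_j+\sum_{i=1}^k\xi_i(x_{i-1}(u_j))$: by Lemma \ref{lem1} each coordinate has the distribution of a centered Gaussian random walk whose step variance equals $\Gamma(0)$ times the chosen covariance scaling, so a Burkholder--Davis--Gundy estimate yields $E|\wt x_n(u_j,t)-\wt x_n(u_j,s)|^4\le C(t-s)^2$ for $|t-s|\ge 1/n$, with the routine extension within a single cell. Kolmogorov's criterion then gives marginal tightness, and hence joint tightness in $C([0,1],\mbR^l)$.

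To identify the limit I would perform a discrete Taylor expansion: for $f\in C_c^\infty(\mbR^l)$ and with the shorthand $\Delta_{k+1}^{(j)}:=\xi_{k+1}(x_k(u_j))$,
$$
f(x_{k+1}(u))-f(x_k(u))=\sum_i\pt_i f(x_k(u))\,\Delta_{k+1}^{(i)}+\tfrac12\sum_{i,j}\pt_i\pt_j f(x_k(u))\,\Delta_{k+1}^{(i)}\Delta_{k+1}^{(j)}+R_{k+1}.
$$
Conditioning on $\cF_k=\sigma(\xi_1,\dots,\xi_k)$ and using the independence of $\xi_{k+1}$ from $\cF_k$ together with its Gaussian structure, the linear term is centered and the quadratic term produces a discretised version of $Lf$. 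Summing over $k\le\lfloor nt\rfloor$, while bounding the cumulative contribution of $R$ via $\|\pt^3 f\|_\infty$ and the Gaussian moments, exhibits
$$
f(\wt x_n(u,t))-\int_0^t Lf(\wt x_n(u,s))\,ds
$$
as a martingale plus an error vanishing in $L^1$ uniformly on $[0,1]$. Sending $n\to\infty$ along a weakly convergent subsequence then identifies the limit as a solution of the martingale problem, and uniqueness finishes the proof.

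The principal obstacle I foresee is in the very last step: one must transfer the weak convergence of $\wt x_n$ into the convergence of the time integral $\int_0^t\Gamma(\wt x_n(u_i,s)-\wt x_n(u_j,s))\,\pt_i\pt_j f(\wt x_n(u,s))\,ds$. I would handle this via Skorokhod's representation followed by the continuous-mapping principle, using joint tightness and the uniform continuity of $\Gamma$ and $\pt_i\pt_j f$ on compact sets. A secondary bookkeeping issue is to make the Taylor remainder $R_{k+1}$ vanish uniformly in $k$ and $n$; this follows from the Gaussian control of all moments of $\Delta$ and the chosen covariance scaling.
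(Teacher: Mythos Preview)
Your proposal is correct and follows the same overall architecture as the paper: establish tightness in $C([0,1],\mbR^l)$ and then identify every subsequential limit through a martingale-problem argument. The paper obtains marginal tightness directly from Lemma~\ref{lem1} and the invariance principle (each coordinate is a rescaled Gaussian random walk), which is a shortcut to what you get via BDG, but the substance is the same.

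The one genuine difference is in how the limit is identified. You work with the full $l$-dimensional generator
$Lf(v)=\tfrac12\sum_{i,j}\Gamma(v_i-v_j)\,\pt_i\pt_j f(v)$
and invoke well-posedness of this martingale problem (legitimate here, since the paper already records that the Lipschitz decomposition $a\colon\mbR\to l_2$ makes the SDE \eqref{eq3} strongly well-posed and its flow \emph{is} the Harris flow). The paper instead reduces to the one-dimensional martingale problem for the pairwise difference $y_n=\wt x_n(u_{j+1},\cdot)-\wt x_n(u_j,\cdot)$, with operator $Af=(1-\Gamma)f''$; from uniqueness of the limiting one-dimensional SDE $dy=\sqrt{2-2\Gamma(y)}\,dw$ it extracts nonnegativity of $y$ (via the strong Markov property at the hitting time of $0$), hence order preservation, and then says the joint characteristics are checked ``in a standard way''. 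Your route packages order preservation and the joint characteristics into a single well-posedness statement, which is cleaner; the paper's route has the advantage of making the order-preservation mechanism explicit and of only requiring well-posedness for a scalar diffusion. Either way the passage to the limit in the drift term---your ``principal obstacle''---is handled exactly as you say, by Skorokhod representation and continuity of $(v,s)\mapsto\Gamma(v_i-v_j)\,\pt_i\pt_j f(v)$.
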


\begin{proof}
It follows from Lemma \ref{lem1} and the invariance principle, that for every
$j=1, \ldots, l$  \ $\wt{x}_n(u_j, \cdot)$   weakly converges in
$C([0; 1])$   to the Brownian motion which starts from $u_j.$   Then, it
remains to prove that any limit point of $\{\wt{x}_n(u_j, \cdot), j=1,
\ldots, l\}$  coincides with the $l$-point motion of the Harris flow. Without
loss of generality suppose that the whole sequence $\{\wt{x}_n(u_j, \cdot),
j=1, \ldots, l\}$   weakly converges. For a function
$f\in C^3(\mbR)$  with bounded derivatives, consider the random processes
$$
y_n(t)=\wt{x}_n(u_{j+1}, t)-\wt{x}_n(u_j, t),
$$
$$
z_n(t)=f(y_n(t))-f(u_{j+1}-u_j)-\int^t_0(1-\Gamma(y_n(s)))f''(s)ds.
$$
Following the known procedure (see for example \cite{4}), it is easy to verify,
that  $\{z_n; n\geq1\}$  weakly converges to a certain martingale.
Consequently the weak limit of $y_n$  satisfies the martingale problem
for the operator
$$
Af(x)=(1-\Gamma(x))\frac{d^2}{dx^2}f(x).
$$
Since the martingale problem now has a unique solution \cite{4}, then the
weak limit of $y_n$  is the solution to the following Cauchy problem
$$
\begin{cases}
dy(t)=\sqrt{2-2\Gamma(y(t))}dw(t),\\
y(0)=u_{j+1}-u_j.
\end{cases}
$$
The solution to this SDE has the strong Markov property. Consequently $y$  is
nonnegative for $u_{j+1}-u_j>0.$   Hence, the weak limit of $\{\wt{x}_n(u_j,
\cdot); j=1, \ldots, l\}$  preserves the order. It remains to check the form
of the joint cha\-rac\-te\-ris\-tic, which can be done in a standard way. The theorem is
proved.
\end{proof}

The previous result is based on the uniqueness of a solution to SDE related to
a stochastic flow. Now we consider the convergence of difference
approximations
to the $n$-point motions of the Arratia flow. Let us recall that Arratia's flow
\cite{5} is the Harris flow with the local characteristic $\Gamma=\1_{\{0\}}.$   
In this flow any two trajectories coalesce into a single one in finite time.
\begin{thm}
\label{thm2}
Suppose, that for every $m\geq1 \ \wt{x}_m$  is built upon a
sequence $\{\xi^m_n; n\geq1\}$   where  independent identically
distributed processes $\xi^m_n$  have the covariance function $\Gamma_m$
which satisfies the Lipschitz condition. Define for $m\geq1$
$$
C_m=\sup_{\mbR}\frac{2-2\Gamma_m(x)}{x^2}.
$$
If

1) $\lim_{m\to\infty}\frac{C_me^{C_m}}{m}=0,$

2) for every $\delta>0$ \ $\sup_{\mbR\setminus[-\delta; \delta]}|\Gamma_m(x)|
\to0, m\to\infty,$

  then the random processes
$\{\wt{x}_m(u_1, \cdot), \ldots, \wt{x}_m(u_l, \cdot); m\geq1\}$ weakly converge to the $l$-point motion of
Arratia's flow starting from $u_1, \ldots, u_l.$
\end{thm}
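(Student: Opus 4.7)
The plan is to mimic the route of Theorem~\ref{thm1} --- tightness via Lemma~\ref{lem1} and the invariance principle, identification of a subsequential limit through a martingale problem --- while replacing the uniqueness argument, which fails for the Arratia diffusion, by a direct quantitative coalescence bound. This bound is where both hypotheses will be consumed.

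Tightness of the one-point motions is immediate from Lemma~\ref{lem1} and Donsker's theorem: each $\wt x_m(u_j,\cdot)$ is the polygonal interpolation of a Gaussian random walk and converges weakly in $C([0,1])$ to a Brownian motion started at $u_j$, so the $l$-tuple is jointly tight in $C([0,1],\mbR^l)$; pass to a convergent subsequence. For a pair $u_j<u_{j+1}$, set $y_m(t)=\wt x_m(u_{j+1},t)-\wt x_m(u_j,t)$, whose sampled values form, by Lemma~\ref{lem1}, the Markov chain $z^m_{n+1}=z^m_n+\sqrt{2\Gamma_m(0)-2\Gamma_m(z^m_n)}\,\eta_n$. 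Running the martingale-problem computation of the proof of Theorem~\ref{thm1} with a test function $f\in C^3(\mbR)$ having bounded derivatives, one gets a compensator involving $\int_0^t(\Gamma_m(0)-\Gamma_m(y_m(s)))f''(y_m(s))\,ds$; hypothesis~2 forces the integrand to converge, on $\{|y|\ge\delta\}$, to $f''(y(s))$, so any weak limit $Y$ satisfies the martingale problem for the $\sqrt 2$-Brownian generator as long as it is bounded away from $0$.

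The crux of the proof is the coalescence estimate: once $z^m_n$ has entered a window $[-\ve,\ve]$, it stays within a slightly larger window for the rest of the time interval with probability tending to $1$, first as $m\to\infty$ and then as $\ve\to 0$. From the recursion and the definition of $C_m$,
\[
E\bigl[(z^m_{n+1})^2\mid z^m_n\bigr]=(z^m_n)^2+2\Gamma_m(0)-2\Gamma_m(z^m_n)\le(1+C_m)(z^m_n)^2,
\]
so $(1+C_m)^{-n}(z^m_n)^2$ is a supermartingale. Applying Doob's inequality from the first hitting time of $[-\ve,\ve]$ gives a raw escape probability of order $\ve^2 e^{mC_m}/\delta^2$, which is too crude on its own; the refinement must exploit both the fact that a single step carries variance of order $1/m$ and the quadratic bound $2\Gamma_m(0)-2\Gamma_m(z)\le C_m z^2$ on the increment variance, and the natural target is a bound of order $\ve^2 C_m e^{C_m}/(m\delta^2)$, which vanishes by hypothesis~1. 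Letting afterwards $\ve\to 0$ concludes that $Y$ is sticky at $0$, while the same supermartingale applied to the negative part of $z^m_n$ forces $Y\ge 0$.

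Finally, Brownian one-point motions together with nonnegative consecutive differences that behave as $\sqrt 2$-Brownian motions absorbed at $0$ characterise the $l$-point motion of Arratia's flow, so every weak-limit point coincides with it and the whole sequence converges. The main obstacle is precisely the coalescence estimate of the previous paragraph: the elementary inequality $(1+C_m)^n\le e^{nC_m}$ is hopelessly loose on the horizon $n=m$, and producing the sharper rate $C_m e^{C_m}/m$ imposed by hypothesis~1 requires building the per-step smallness into a Lyapunov function and optimising the excursion count, which is where the real technical work lies.
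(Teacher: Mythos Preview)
Your proposal has a genuine gap: the coalescence estimate is precisely the content of the theorem, and you only state it as a ``natural target'' without proving it. You correctly observe that the supermartingale $(1+C_m)^{-n}(z_n^m)^2$ gives the useless bound $\ve^2 e^{mC_m}/\delta^2$, and then assert that a refinement to order $\ve^2 C_m e^{C_m}/(m\delta^2)$ is what is needed, but you offer no mechanism for obtaining it. The vague phrase ``building the per-step smallness into a Lyapunov function and optimising the excursion count'' is not an argument. Your nonnegativity claim via ``the same supermartingale applied to the negative part'' is also unclear, since $(z_n^m)^2$ does not see the sign.

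The paper's route avoids this difficulty entirely, and in doing so explains why hypothesis~1 takes the specific form $C_me^{C_m}/m$. By Lemma~\ref{lem1} the chain $z^m_n$ is, in law, exactly the Euler scheme with step $1/m$ for the SDE
\[
d\wt y_m(t)=\sqrt{2-2\Gamma_m(\wt y_m(t))}\,dw(t),\qquad \wt y_m(0)=u_{j+1}-u_j,
\]
whose diffusion coefficient has Lipschitz constant of order $\sqrt{C_m}$. The standard strong error bound for the Euler scheme (cited from~\cite{1}) gives $E\sup_{[0,1]}(v_m-\wt y_m)^2\le C\,C_me^{C_m}/m$; hypothesis~1 is nothing but the requirement that this error vanish. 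Since the continuous solution $\wt y_m$ starts positive and has diffusion coefficient vanishing at $0$, it is nonnegative, and hence any weak limit $Y$ of $y_m$ is nonnegative as well. Now absorption at~$0$ is free: $Y$ is a continuous nonnegative martingale, and such a process stays at~$0$ after hitting it. The quadratic-variation identification away from~$0$ via hypothesis~2 then finishes the argument as in Theorem~\ref{thm1}. No discrete excursion analysis is needed.
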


\begin{proof}
As in the proof of Theorem \ref{thm1}  we have the weak compactness of\newline
$\{(\wt{x}_m(u_1, \cdot), \ldots, \wt{x}_m(u_l, \cdot); m\geq1)\}$
in
$C([0; 1], \mbR^l)$
and the weak convergence of $x_m(u_i, \cdot)$   to a Wiener process.
Consequently, for any limit point of \\
$\{(\wt{x}_m(u_1, \cdot), \ldots, \wt{x}_m(u_l, \cdot); m\geq1)\}$
it is enough to check the mutual characteristics and the order
preserving property. For $u_i<u_{i+1}$   the difference process
$y_m(t)=\wt{x}_m(u_{i+1}, t)-\wt{x}_m(u_{i+1}, t)$
are equidistributed with the difference approximation $v_m$ to the solution of the SDE
$$
\begin{cases}
d\wt{y}_m(t)=\sqrt{2-2\Gamma_m(\wt{y}_m(t))}dw(t),\\
\wt{y}_m(0)=u_{i+1}-u_i.
\end{cases}
$$
It is known \cite{1}, that
$$
E\sup_{[0; 1]}(v_m(t)-\wt{y}_m(t))^2\leq
C\frac{C_me^{C_m}}{m}.
$$
Note, that $\wt{y}_m$  is nonnegative. Consequently, for every $r>0$
$$
P\{\inf_{[0; 1]}y_m<-r\}=P\{\inf_{[0; 1]}v_m<-r\}\to0, m\to\infty.
$$
Hence the weak limit of any subsequence of $\{y_m; m\geq1\}$  is nonnegative.
The completion of the proof can be done exactly as in the previous theorem using
martingale approximation and the fact that any nonnegative martingale remains at zero 
after hitting zero. The theorem is proved.
\end{proof}

{\bf 3. Convergence of random  maps}

In this section we will consider convergence of $\{\wt{x}_n; n\geq1\}$   as  random maps
to  corresponding maps from a  stochastic flow. Let us begin with the
 case of smooth $\Gamma.$ Define  the sequence
\begin{equation}
\label{eq5}
x^m_{n+1}(u)=x^m_n(u)+\frac{1}{\sqrt{m}}\xi_{n+1}(x^m_n(u)),
\end{equation}
where $\{\xi_n; n\geq1\}$ is a sequence of independent  stationary centered Gaussian
processes with covariance function $\Gamma$
satisfying the inequality
$$
\forall \ u\in\mbR: \ 1-\Gamma(u)\leq Cu^2
$$
with some constant $C.$ Define the Harris flow $x$ corresponding to $\Gamma.$
Note, that now $x$ has a modification $x(u, t), u\in\mbR, t\in[0; 1]$
continuous with respect to both variables. Really, using the martingale
inequality one can get that
$$
E\sup_{s\in[0; t]}(x(u, s)-x(v,s))^2\leq
$$
$$
\leq 2(u-v)^2+2E\int^t_0(2-2\Gamma(x(u, s)-x(v, s)))ds\leq
$$
$$
\leq 2(u-v)^2+4C\int^t_0\sup_{r\in[0; s]}(x(u, r)-x(v, r))^2ds.
$$
Consequently, for some $\wt{c}$
$$
E\sup_{t\in[0; 1]}(x(u, t)-x(v, t))^2\leq\wt{c}(u-v)^2.
$$
This inequality together with the Kolmogorov condition gives us the desired property.

The next statement asserts the convergence of our approximations to a stochastic
flow in the case of smooth $\Gamma.$
\begin{thm}
\label{thm3}
The random functions
$\{\wt{x}_m=x^m_m; m\geq1\}$
converge in distribution in the space $C([a; b])$ to the random function
$x$  for arbitrary interval $[a; b].$
\end{thm}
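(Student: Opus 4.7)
My plan is the standard two-ingredient recipe: finite-dimensional convergence together with tightness of the random maps $u \mapsto \wt{x}_m(u) = x^m_m(u)$ in $C([a;b])$. The finite-dimensional distributions at the terminal time $t=1$ I would read directly off Theorem \ref{thm1}: for any $u_1,\dots,u_l \in [a;b]$ the weak convergence of $(\wt{x}_m(u_j,\cdot))_{j=1}^l$ in $C([0;1],\mbR^l)$ to the $l$-point Harris motion, evaluated at $t=1$, yields
$$(\wt{x}_m(u_1),\dots,\wt{x}_m(u_l)) \Rightarrow (x(u_1,1),\dots,x(u_l,1)).$$
Combined with tightness and the continuity of $u \mapsto x(u,1)$ proved just above the statement, this pins the limit down uniquely.

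The heart of the argument is a Kolmogorov--Chentsov-type moment bound on spatial increments that is uniform in $m$. Here I would use Lemma \ref{lem1}, adapted to the $1/\sqrt m$ scaling of \eqref{eq5}: the law of $\wt{x}_m(v) - \wt{x}_m(u)$ coincides with that of $z_m$, where $z_0 = v - u$ and
$$z_{n+1} = z_n + \frac{1}{\sqrt m}\sqrt{2 - 2\Gamma(z_n)}\,\eta_{n+1}, \qquad \eta_{n+1}\sim N(0,1)\text{ independent of the past}.$$
Conditioning on $z_n$ and applying the hypothesis $1-\Gamma(x) \leq Cx^2$ gives
$$E[z_{n+1}^2 \mid z_n] = z_n^2 + \frac{2-2\Gamma(z_n)}{m} \leq \Bigl(1 + \frac{2C}{m}\Bigr) z_n^2,$$
so iterating $m$ times produces
$$E\bigl(\wt{x}_m(v) - \wt{x}_m(u)\bigr)^2 = E z_m^2 \leq \Bigl(1 + \frac{2C}{m}\Bigr)^m (v-u)^2 \leq e^{2C}(v-u)^2,$$
uniformly in $m$. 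Kolmogorov--Chentsov with exponent $p=2$ and $1+\beta = 2$ then delivers tightness of $\{\wt{x}_m\}$ in $C([a;b])$, which together with the f.d.d.\ convergence above closes the argument.

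The step I expect to be the main obstacle is precisely the uniformity in $m$ of the moment estimate: had the per-step multiplier on $E z_n^2$ been larger than $1 + O(1/m)$, iterating $m$ times would blow it up. It is exactly the quadratic control $1-\Gamma \leq Cu^2$ combined with the $1/\sqrt m$ scaling of \eqref{eq5} that keeps the iteration Gronwall-bounded and yields a constant independent of $m$. Everything else --- extracting f.d.d.\ at $t=1$ from Theorem \ref{thm1}, verifying the Kolmogorov--Chentsov tightness criterion on $[a;b]$, and matching the limit to $u \mapsto x(u,1)$ --- is routine.
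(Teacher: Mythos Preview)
Your proposal is correct and matches the paper's proof essentially line for line: the paper cites Theorem~\ref{thm1} for finite-dimensional convergence and then derives the same recursion $E(x^m_{n+1}(u)-x^m_{n+1}(v))^2 \le (1+2C/m)\,E(x^m_{n}(u)-x^m_{n}(v))^2$ to obtain the uniform bound $E(\wt x_m(u)-\wt x_m(v))^2\le e^{2C}(u-v)^2$, from which weak compactness follows. Your only cosmetic difference is routing the moment computation through the Lemma~\ref{lem1} representation of the two-point difference rather than conditioning directly on $x^m_n$, but the arithmetic is identical.
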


\begin{proof}
The convergence of finite-dimensional distributions was proved in
Theorem \ref{thm1}. It remains to check the weak compactness of
$\{\wt{x}_m; m\geq1\}.$   For arbitrary $u, v\in\mbR$    we have
$$
E(x^m_{n+1}(u)-x^m_{n+1}(v))^2=
E(x^m_{n}(u)-x^m_{n}(v))^2+
\frac{1}{m}E(2-2\Gamma(x^m_{n}(u)-x^m_{n}(v)))\leq
$$
$$
\leq
E(x^m_{n}(u)-x^m_{n}(v))^2+ 2Cm^{-1}E(x^m_{n}(u)-x^m_{n}(v))^2.
$$
Consequently,
$$
E(\wt{x}_{m}(u)-\wt{x}_{m}(v))^2\leq(u-v)^2
(1+\frac{2C}{m})^m\leq e^{2C}(u-v)^2.
$$
The obtained estimation gives the desired weak compactness. The theorem is
proved.
\end{proof}

To obtain approximation of Arratia's flow we need some
additional results about the convergence of smooth stochastic flows to Arratia's flow.
Let us consider the following SDE with the space-time white noise (Wiener
sheet) $W$
\begin{equation}
\label{eq6}
\begin{split}
&dz(u, t)=\int_{\mbR}\vf(z(u, t)-p)W(dp, dt),\\
&
z(u,0)=u, \ u\in\mbR,
\end{split}
\end{equation}
where $\vf\in C^\infty_0(\mbR)$   and $\int_{\mbR}\vf^2(u)du=1$ (see
\cite{6, 7}  about equations of type \eqref{eq6}). All what we need here is
a statement, that under our condition on $\vf$  the unique strong
solution to \eqref{eq6}  exists and is the Harris flow corresponding to the
local characteristic
$$
\Gamma(u)=\int_{\mbR}\vf(-p)\vf(u-p)dp.
$$
It was proved in \cite{8}, that the $n$-point motions of solutions $z_\ve$  to
\eqref{eq6}  which corresponds to $\vf_\ve$    with the property
$\supp\vf_\ve\subset[-\ve; \ve]$  converge in distribution to the $n$-point
motions of the Arratia flow when $\ve\to0.$

Consider discrete approximations of $z.$   For every $n\geq1$  define
\begin{equation}
\label{eq8}
\begin{split}
&z^n_0(u)=u,\\
&
z^n_{k+1}(u)=z^n_k(u)+
\int^{\frac{k+1}{n}}_{\frac{k}{n}}\int_{\mbR}
\vf(z^n_k(u)-p)W(dp, dt), \\
&
k=0, \ldots, n-1.
\end{split}
\end{equation}
It can be easily checked that every $z^n_k$  has a continuous modification.
The next theorem gives a speed of convergence of $z^n_n$  to $z(\cdot, 1)$
in the space $C([0; 1]).$   Define
$$
L^2=\int_{\mbR}\vf'(p)^2dp.
$$

\begin{thm}
\label{thm3'}
There exist such positive constants  $C', C'', C''',$  that for every $n\geq1$
\begin{equation}
\label{eq9}
E\|z^n_n-z(\cdot, 1)\|\leq \frac{C'}{\sqrt{n}}
\exp\{(C''L^2+C'''L^4)e^{4L^2}+L^2\}(L^2+1).
\end{equation}
where $\|\cdot\|$  is the uniform norm in $C([0; 1]).$
\end{thm}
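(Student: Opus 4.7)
The plan is to prove the stated rate in three steps: a pointwise $L^2$ bound for the error $z^n_n(u) - z(u,1)$, a two-point bound on its spatial increments, and a chaining argument (Garsia--Rodemich--Rumsey) to combine the two into the uniform estimate.

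For the pointwise bound I would set $D_k(u) := z^n_k(u) - z(u, k/n)$ and use the telescoping identity
$$D_{k+1}(u) = D_k(u) + \int_{k/n}^{(k+1)/n} \int_{\mbR} [\vf(z^n_k(u)-p) - \vf(z(u,t)-p)] W(dp, dt).$$
Conditionally on $\cF_{k/n}$ the martingale increment is $L^2$-orthogonal to $D_k(u)$, so It\^o's isometry gives
$$E D_{k+1}(u)^2 = E D_k(u)^2 + E\int_{k/n}^{(k+1)/n} \bigl[2 - 2\Gamma(z^n_k(u) - z(u,t))\bigr] dt.$$
Splitting $z^n_k(u) - z(u,t) = D_k(u) + (z(u,k/n) - z(u,t))$, using the elementary bound $2 - 2\Gamma(x) \leq L^2 x^2$ (which follows from $|\Gamma''| \leq L^2$ via Cauchy--Schwarz applied to $\vf'$), and the variance $E(z(u,t) - z(u,k/n))^2 \leq 1/n$, a discrete Gronwall inequality produces $E D_n(u)^2 \leq C L^2 e^{2L^2}/n$ uniformly in $u$. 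Exactly the same technique applied separately to $z^n_k(u) - z^n_k(v)$ and to $z(u,t) - z(v,t)$ (coupled with Burkholder--Davis--Gundy for higher moments) furnishes the auxiliary spatial bounds
$$E(z^n_k(u) - z^n_k(v))^{2p} + E(z(u,t) - z(v,t))^{2p} \leq C_p(u-v)^{2p}e^{C_p L^2}.$$

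The next step is the two-point analogue for the error itself, i.e.\ a recursion for $\beta_k(u,v) := E(D_k(u) - D_k(v))^2$. The same orthogonality argument yields
$$\beta_{k+1} - \beta_k = E \int_{k/n}^{(k+1)/n} \int_{\mbR} \Delta_t(p)^2\, dp\, dt,$$
with $\Delta_t(p) := \vf(z^n_k(u)-p) - \vf(z^n_k(v)-p) - \vf(z(u,t)-p) + \vf(z(v,t)-p)$. Expanding the $dp$-integral in terms of the autocorrelation $\psi(r) := \int \vf'(p)\vf'(p+r)\, dp$ of $\vf'$ and writing $\psi = L^2 - \eta$ with $\eta \geq 0$ yields the exact identity
$$\int_{\mbR} \Delta_t(p)^2\, dp = L^2 (a - b - c + d)^2 - R,$$
where $(a,b,c,d) = (z^n_k(u), z^n_k(v), z(u,t), z(v,t))$. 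The leading term $L^2(a-b-c+d)^2$ is directly controlled by $\beta_k$ together with the small continuous-time correction $(z(u,k/n)-z(u,t)) - (z(v,k/n) - z(v,t))$, while the residual $R$ is handled using the trivial bound $\eta \leq 2L^2$ combined with the spatial moment estimates from the previous paragraph. Running Gronwall through the resulting recursion delivers the two-point estimate on $\beta_n(u,v)$.

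Finally I would apply the Garsia--Rodemich--Rumsey inequality on $[0;1]$ (or an equivalent Kolmogorov--Chentsov chaining) to combine the pointwise bound $E D_n(u)^2 \lesssim 1/n$ with the spatial regularity of the error produced in the previous step, and to deduce the asserted uniform estimate. The main obstacle is the two-point step: one has to extract a useful upper bound on $\int \Delta_t^2 dp$ purely in terms of $L$ (without invoking higher-derivative norms of $\vf$), and then close the resulting recursion tightly enough to produce the $n^{-1/2}$ rate with the specific exponential factor $e^{4L^2}$ and polynomial factor $L^2 + L^4$ appearing in the stated bound.
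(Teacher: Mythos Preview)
Your pointwise step (the recursion for $E D_k(u)^2$) matches the paper's opening computation. The route to the uniform bound, however, is genuinely different. The paper does not use two-point increments or chaining at all: since $\vf\in C^\infty_0$, it differentiates in $u$, sets $y^n_k=\partial_u z^n_k$ and $y=\partial_u z$, and observes that $y(u,\cdot)$ is an explicit geometric Brownian motion, $y(u,t)=\exp\{\eta(t)-\tfrac12 L^2 t\}$ with $\langle\eta\rangle(t)=L^2t$. It then runs a Gronwall recursion on $E\big(y^n_k(u)-y(u,k/n)\big)^2$, splitting the martingale increment into four pieces; the explicit moments of $y$ (in particular $\sqrt{Ey^4}=e^{3L^2}$, combined via Cauchy--Schwarz with a fourth-moment version of the pointwise bound) are precisely what produce the factor $e^{4L^2}$ and the $L^4$ term in the stated exponent. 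The uniform norm then comes for free from the fundamental theorem of calculus,
\[
\|z^n_n - z(\cdot,1)\| \le |z^n_n(0)-z(0,1)| + \int_0^1 |y^n_n(u)-y(u,1)|\,du,
\]
with no GRR needed.

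Your chaining route is in spirit a finite-difference version of the same idea, but the step you yourself flag as ``the main obstacle'' is genuinely incomplete. The identity $\int\Delta_t^2\,dp = L^2(a-b-c+d)^2 - R$ does \emph{not} come with $R\ge 0$: take $a=1$, $b=c=0$, $d=-1$, so that $a-b-c+d=0$ while $\Delta_t(p)=\vf(1-p)-2\vf(-p)+\vf(-1-p)$ is generically nonzero. Thus $R$ is a signed second-order remainder that must itself be shown to be of order $(u-v)^2/n$ in expectation, not merely $(u-v)^2$; bounding $\eta\le 2L^2$ and invoking the spatial moment estimates for $z^n_k$ and $z$ alone gives only the latter. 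Without the extra factor $1/n$ in $\beta_n(u,v)$, GRR with second moments cannot deliver the $n^{-1/2}$ rate for the sup norm. The paper's derivative calculation achieves the analogous control directly, because each of the four cross-terms there is a product in which one factor already carries an explicit $1/n$ from a time increment of $z$ or $y$.
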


\begin{proof}
Consider for $k=1, \ldots, n$
$$
E\left(z^n_k(0)-z\left(0, \frac{k}{n}\right)\right)^2=
E\left(z^n_{k-1}(0)-z\left(0, \frac{k-1}{n}\right)\right)^2+
$$
$$
+E
\int^{\frac{k}{n}}_{\frac{k-1}{n}}\int_{\mbR}
(\vf(z^n_{k-1}(0)-p)-\vf(z(0, s)-p))^2dpds\leq
$$
$$
\leq E
\left(z^n_{k-1}(0)-z\left(0, \frac{k-1}{n}\right)\right)^2+
L^2E
\int^{\frac{k}{n}}_{\frac{k-1}{n}}
(z^n_{k-1}(0)-z(0, s))^2ds=
$$
$$
=
E\left(z^n_{k-1}(0)-z\left(0, \frac{k-1}{n}\right)\right)^2
\left(1+L^2\frac{1}{n}\right)+
$$
$$
+
L^2E
\int^{\frac{k}{n}}_{\frac{k-1}{n}}
\left(z(0)-z\left(0, \frac{k-1}{n}\right)\right)^2ds=
$$
$$
=
E\left(z^n_{k-1}(0)-z\left(0, \frac{k-1}{n}\right)\right)^2
\left(1+\frac{L^2}{n}\right)+\frac{L^2}{2n^2}.
$$
Consequently,
$$
E(z^n_n(0)-z(0, 1))^2\leq \frac{L^2}{n^2}e^{L^2}.
$$
Note, that under our conditions on $\vf,$  random functions $\{z^n_k\}$  and
$z$   have  continuous derivatives with respect to the spatial variable.

Let us denote by $y^n_k$  and $y$  these derivatives. Then for
$k=1, \ldots, n$
$$
y^n_k(u)=y^n_{k-1}(u)
\left(
1+
\int^{\frac{k}{n}}_{\frac{k-1}{n}}\int_{\mbR}
\vf'(z^n_{k-1}(u)-p)W(dp,dt)\right),
$$
and
$$
dy(u, t)=y(u, t)\int_{\mbR}\vf'(z(u, t)-p)W(dp, dt).
$$
Hence,
$$
y^n_k(u)-
y\left(u, \frac{k}{n}\right)=
y^n_{k-1}(u)-
y\left(u, \frac{k}{n}\right)+
$$
$$
+
\int^{\frac{k}{n}}_{\frac{k-1}{n}}\int_{\mbR}
[y^n_{k-1}(u)\vf'(z^n_{k-1}(u)-p)-y(u, s)\vf'(z(u, s)-p)]W(dp, dt)=
$$
$$
=
y^n_{k-1}(u)-
y\left(u, \frac{k-1}{n}\right)+
$$
$$
+
\int^{\frac{k}{n}}_{\frac{k-1}{n}}\int_{\mbR}
\left[\left(y^n_{k-1}(u)-
y\left(u, \frac{k-1}{n}\right)
\right)
\vf'(z^n_{k-1}(u)-p)+
\right.
$$
$$
+
y\left(u, \frac{k-1}{n}\right)
\left(\vf'(z^n_{k-1}(u)-p\right)-
\vf'\left(z\left(u, \frac{k-1}{n}-p\right)+
\right.
$$
$$
+
y\left(u, \frac{k-1}{n}\right)
\left(\vf'\left(z\left(u, \frac{k-1}{n}\right)-p\right)-
\vf'(z(u, s)-p)\right)+
$$
$$
\left.
+\vf'(z(u, s)-p)
\left(y\left(u,\frac{k-1}{n}\right)-y(u, s)\right)\right] W(dp, ds).
$$
Then
$$
E
\left(
y^n_k(u)-y\left(u, \frac{k}{n}\right)\right)^2=
E
\left(
y^n_{k-1}(u)-y\left(u, \frac{k-1}{n}\right)\right)^2+
$$
$$
+
4E
\int^{\frac{k}{n}}_{\frac{k-1}{n}}\int_{\mbR}
\left(
y^n_{k-1}(u)-y\left(u, \frac{k-1}{n}\right)\right)^2
\vf'(z^n_{k-1}(u)-p)^2dpds+
$$
$$
+
4E
\int^{\frac{k}{n}}_{\frac{k-1}{n}}\int_{\mbR}
y\left(u, \frac{k-1}{n}\right)^2
(\vf'(z^n_{k-1}(u)-p)-
\vf'\left(z\left(u, \frac{k-1}{n}\right)-p\right)^2dpds+
$$
$$
+
4E
\int^{\frac{k}{n}}_{\frac{k-1}{n}}\int_{\mbR}
y\left(u, \frac{k-1}{n}\right)^2
\left(\vf'(z\left(u,\frac{k-1}{n}\right)-p\right)-
\vf'(z(u, s)-p))^2dpds+
$$
$$
+
4E
\int^{\frac{k}{n}}_{\frac{k-1}{n}}
\int_{\mbR}
\vf'(z(u, s)-p)^2
\left(y\left(u, \frac{k-1}{n}\right)-y(u, s)\right)^2dpds\leq
$$
$$
\leq
E
\left(y^n_{k-1}(u)-y\left(u, \frac{k-1}{n}\right)\right)^2\cdot
(1+\frac{4}{n}L^2)+
$$
$$
+\frac{4}{n}E
y\left(u, \frac{k-1}{n}\right)^2\cdot L^2
\left(
z^n_{k-1}(u)-z\left(u, \frac{k-1}{n}\right)
\right)^2+
$$
$$
+
4E
 y\left(u, \frac{k-1}{n}\right)^2L^2
\int^{\frac{k}{n}}_{\frac{k-1}{n}}
\left(
z\left(u, \frac{k-1}{n}\right)
-z(u, s)
\right)^2ds+
$$
$$
+
4L^2E
\int^{\frac{k}{n}}_{\frac{k-1}{n}}
 \left
(y\left(u, \frac{k-1}{n}\right)-y(u, s)\right)^2ds.
$$
Note, that the processes $z(u, t), t\in[0; 1]$  and
$$
\eta(t)=\int^t_0\int_{\mbR}\vf'(z(u, s)-p)W(dp, ds), \ t\in[0; 1]
$$
are continuous martingales with the characteristics
$$
\lg z(u, \cdot)\rg(t)=t, \
\lg\eta\rg(t)=L^2t.
$$
Consequently, $z(u, \cdot)$  and $\eta$  are Wiener processes. It
follows from this that
$$
y(u, t)=\exp\{\eta(t)-\frac{t}{2}L^2\}.
$$
Hence
$$
E
y\left(u, \frac{k-1}{n}\right)^2
\int^{\frac{k}{n}}_{\frac{k-1}{n}}
 \left
(z\left(u, \frac{k-1}{n}\right)-z(u, s)\right)^2ds=
$$
$$
=
E
y\left(u, \frac{k-1}{n}\right)^2
E
\int^{\frac{k}{n}}_{\frac{k-1}{n}}
 \left
(z\left(u, \frac{k-1}{n}\right)-z(u, s)\right)^2ds\leq
$$
$$
\leq
\frac{1}{2n^2}e^{L^2},
$$
$$
E
\int^{\frac{k}{n}}_{\frac{k-1}{n}}
 \left
(y\left(u, \frac{k-1}{n}\right)-y(u, s)\right)^2ds=
$$
$$
=
E
\int^{\frac{k}{n}}_{\frac{k-1}{n}}
\left(
\int^s_{\frac{k-1}{n}}\int_{\mbR}
y(u, r)\vf'(z(u, r)-p)W(dp, dr)\right)^2ds=
$$
$$
=L^2 E
\int^{\frac{k}{n}}_{\frac{k-1}{n}}
\int^s_{\frac{k-1}{n}}
y(u, r)^2drds\leq
$$
$$
\leq
\frac{1}{2n^2}L^2\cdot e^{L^2}.
$$
Furthermore,
$$
E
y\left(u, \frac{k-1}{n}\right)^2
\left(
z^n_{k-1}(u)-z\left(u, \frac{k-1}{n}\right)
\right)^2\leq
$$
$$
\leq
\sqrt{E y\left(u, \frac{k-1}{n}\right)^4}
\sqrt{E\left(z^n_{k-1}(u)-z\left(u, \frac{k-1}{n}\right)\right)^4}\leq
$$
$$
\leq
e^{3L^2}C_2\frac{L^2}{n}e^{L^2}.
$$

In the last inequality the martingale property of $x^n$ and $x$  was used.

Finally one can get
$$
E\left(
y^n_k(u)-y\left(u, \frac{k}{n}\right)
\right)^2\leq
$$
$$
\leq
E\left(
y^n_{k-1}(u)-y\left(u, \frac{k}{n}\right)
\right)^2
\left(1+\frac{4L^2}{n}+\frac{4}{n}L^4C_2e^{4L^2}\right)+
\frac{c_3}{n^2}(L^2+1)e^{L^2}.
$$
Consequently,
$$
E(y^n_n(u)-y(u, 1))^2\leq
$$
$$
\leq
\frac{c_4}{n}\exp\{(c_5L^2+c_6L^4)e^{4L^2}+L^2\}(L^2+1).
$$
To obtain an estimation for the uniform norm  $\|z^n_n-z(\cdot, 1)\|$ we
proceed as follows
$$
E\|z^n_n-z(\cdot, 1)\|\leq E
|z^n_n(0)-z(0, 1)|+E\int^1_0|y^n_n(u)-y(u, 1)|du\leq
$$
$$
\leq
\frac{c_7}{\sqrt{n}}\exp\{(c_8L^2+c_9L^4)e^{4L^2}+L^2\}(L^2+1).
$$
The theorem is proved.
\end{proof}

Obtained estimation can be used to get the convergence of the difference
approximation to Arratia's flow. We will establish this convergence using the L\'evy--Prokhorov distance. Let us recall its definition.

\begin{defn}
\label{defn1} \cite{9}.
For two nondecreasing c\`adl\`ag functions $f, g$  on $[0; 1]$ the
L\'evy--Prokhorov distance is
$$
\begin{aligned}
\rho(f, g)=&\inf\{\ve>0: \ \forall \ u\in[0; 1]: \\
&
f(u-\ve)-\ve\leq g(u)\leq f(u+\ve)+\ve\\
&
g(u-\ve)-\ve\leq f(u)\leq g(u+\ve)+\ve\}.
\end{aligned}
$$
\end{defn}

It is well-known \cite{9}  that the convergence in this distance is
equivalent to the convergence at every point of continuity of the limit
function. Also note that
$$
\rho(f, g)\geq d(f, g),
$$
where $d(f, g)$ is the Skorokhod distance between $f$  and $g$  \cite{9}.

Take a function $\psi\in C^\infty_0$   with $\supp\psi\subset[-1; 1]$
such, that
$$
\int_{\mbR}\psi^2(u)du=1.
$$
For arbitrary $\ve>0$   define
$$
\psi_\ve(u)=\frac{1}{\ve^{1/2}}\psi\left(\frac{u}{\ve}\right),
$$
$$
\Gamma_\ve(u)=\frac{1}{\ve}\int_{\mbR}\psi_\ve(p)\psi_\ve(u+p)dp.
$$
The parameter $\ve$ here is associated with the smoothness of $\Gamma_\ve$. 
In order to approximate the Arratia flow we have to take $\ve\to 0$.
For independent Gaussian processes $\{\xi_n; n\geq1\}$  with the covariance
$\{\Gamma_{\ve_n}\}$    let us construct the sequences
$$
x^n_{k+1}(u)=x^n_k(u)+\frac{1}{\sqrt{n}}\xi_n(x^n_k(u)).
$$
The next theorem shows that $x^n_n$   can be used to approximate the
Arratia flow.

\begin{thm}
\label{thm4}
Suppose that $\ve_n\to0, n\to\infty,$
$$
\frac{1}{\ve^2_n}=o(\lln n), \ n\to\infty.
$$
Then the random functions $x^n_n$   converge weakly in $D([0; 1])$   to the
value of the Arratia flow $x(\cdot, 1).$
\end{thm}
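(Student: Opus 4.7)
The plan is to combine finite-dimensional convergence from Theorem~\ref{thm2} with a tightness argument obtained by coupling $x^n_n$ to a suitable smooth Harris flow via the Wiener-sheet SDE~\eqref{eq6} and the quantitative estimate~\eqref{eq9}.

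First I would check that $\Gamma_{\ve_n}$ meets the hypotheses of Theorem~\ref{thm2}. Since $\psi_{\ve_n}$ is supported in $[-\ve_n,\ve_n]$, $\Gamma_{\ve_n}$ vanishes outside $[-2\ve_n,2\ve_n]$ and condition~(2) is immediate. A Taylor expansion at the origin using $\Gamma_{\ve_n}''(0)=-\int_{\mbR}(\psi_{\ve_n}')^2=O(\ve_n^{-2})$ gives $C_{\ve_n}=O(\ve_n^{-2})$, and then $\ve_n^{-2}=o(\lln n)$ implies $C_{\ve_n}e^{C_{\ve_n}}/n\to 0$. Applying Theorem~\ref{thm2} at time $1$ therefore yields $(x^n_n(u_1),\ldots,x^n_n(u_l))\Rightarrow(x(u_1,1),\ldots,x(u_l,1))$ for every finite set of space points.

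Next I would observe that the Markov chain $\{x^n_k\}$ and the Wiener-sheet scheme $\{z^n_k\}$ of~\eqref{eq8} with $\vf=\psi_{\ve_n}$ share the same transition kernels---both give centered Gaussian conditional increments with covariance $\frac{1}{n}\Gamma_{\ve_n}(\,\cdot-\cdot\,)$ given the previous state---so $x^n_n$ and $z^n_n$ agree in distribution as random maps on $\mbR$. Theorem~\ref{thm3'} with $L^2=L^2(\psi_{\ve_n})\asymp\ve_n^{-2}$ then controls $E\|z^n_n-z_{\ve_n}(\cdot,1)\|$, where $z_{\ve_n}(\cdot,1)$ is the time-$1$ map of the Harris flow associated with $\Gamma_{\ve_n}$, a random nondecreasing continuous function. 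The convergence result from~\cite{8} together with monotonicity of $z_{\ve_n}(\cdot,1)$ yields $\rho(z_{\ve_n}(\cdot,1),x(\cdot,1))\to 0$ in probability in the L\'evy--Prokhorov distance, hence also in the Skorokhod distance $d$. Combining these two bounds via the triangle inequality (using $d\le\|\cdot\|_\infty$ for continuous functions and $d\le\rho$ on the monotone pair $(z_{\ve_n}(\cdot,1),x(\cdot,1))$) would give $d(x^n_n,x(\cdot,1))\to 0$ in probability, which is equivalent to $x^n_n\Rightarrow x(\cdot,1)$ in $D([0;1])$.

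The main obstacle is extracting $E\|z^n_n-z_{\ve_n}(\cdot,1)\|\to 0$ from~\eqref{eq9} under the single hypothesis $\ve_n^{-2}=o(\lln n)$: the right-hand side contains the factor $\exp\{(C''L^2+C'''L^4)e^{4L^2}\}$ with $L^2\asymp\ve_n^{-2}$, so a blunt substitution of $L^2$ into~\eqref{eq9} actually demands $\ve_n^{-2}=O(\lln\lln n)$, far more than the theorem assumes. The delicate part of the proof will therefore be either to sharpen~\eqref{eq9} for kernels of the scaling form $\psi_\ve$ (exploiting the explicit dilation structure), or to replace the uniform-norm estimate by a weaker probabilistic control adapted to the Skorokhod topology, so that the precise balance between the time step $1/n$ and the smoothing scale $\ve_n$ encoded in $\ve_n^{-2}=o(\lln n)$ suffices.
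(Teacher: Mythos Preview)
Your approach coincides with the paper's: couple $x^n_n$ to the Wiener-sheet scheme $z^n_n$, use the quantitative bound~\eqref{eq9} to pass from $z^n_n$ to the smooth flow $z_{\ve_n}(\cdot,1)$ in uniform norm, and then use~\cite{8} together with~\cite{10} to pass from $z_{\ve_n}(\cdot,1)$ to the Arratia map in the L\'evy--Prokhorov (hence Skorokhod) distance. The preliminary appeal to Theorem~\ref{thm2} for finite-dimensional convergence is harmless but redundant; the paper gets it as a byproduct of the coupling.

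Your ``main obstacle'' is a miscount of iterated logarithms. With
\[
L^2=\int_{\mbR}(\psi_{\ve_n}')^2\,dp=\ve_n^{-2}\int_{\mbR}(\psi')^2\,dp
\]
(your scaling $L^2\asymp\ve_n^{-2}$ is the correct one; the paper's displayed $\ve_n^{-1}$ appears to be a slip), the right-hand side of~\eqref{eq9} tends to $0$ exactly when
\[
(C''L^2+C'''L^4)e^{4L^2}=o(\ln n),
\]
that is, when $4L^2+O(\ln L)$ eventually falls below $\lln n$, that is, when $L^2=o(\lln n)$. This is precisely the hypothesis $\ve_n^{-2}=o(\lln n)$. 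No sharpening to $\ve_n^{-2}=O(\lln\lln n)$ is needed: you over-counted by one exponential, forgetting that it is $n^{-1/2}$, not $(\ln n)^{-1/2}$, that multiplies the doubly-exponential factor in~\eqref{eq9}. The paper simply asserts $E\|\wt{x}_n-z_{\ve_n}(\cdot,1)\|\to 0$ without further comment, relying on exactly this arithmetic; nothing delicate remains.
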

\begin{proof}
Consider the sequence of SDE
$$
dz_{\ve_n}(u, t)=\int_{\mbR}\psi_\ve(z_{\ve_n}(u, t)-p)W(dp, dt).
$$
As it was mentioned in the beginning of this section, for every
$u_1, \ldots, u_m\in[0; 1]$
$(z_{\ve_n}(u_1, 1), \ldots, z_{\ve_n}(u_m, 1))$
weakly converge to
$(x(u_1, 1), \ldots, x(u_m, 1)).$   Hence \cite{10},
$z_{\ve_n}(\cdot, 1)$  weakly converge to $x(\cdot, 1)$  in the
L\'evy--Prokhorov distance. For every $n\geq1$  the sequence
$x^n_1, \ldots, x^n_n$   is equidistributed with the discrete approximations
to $z_{\ve_n}$ from Theorem \ref{thm3'}. Consequently, $x^n_n$   is
equidistributed with
$\wt{x}_n$   such that
$$
E\|\wt{x}_n-z_{\ve_n}(\cdot, 1)\|\leq \frac{C'}{\sqrt{n}}
\exp\{(C''L^2_{\ve_n}+C'''L^4_{\ve_n})e^{4L^2_{\ve_n}}+L^2_{\ve_n}\}(L^2_{\ve_n}+1),
$$
where $L^2_{\ve_n}=\frac{1}{\ve_n}\int_{\mbR}\,\psi'(p)^2dp$. Hence 
$$
E\|\wt{x}_n-z_{\ve_n}(\cdot, 1)\|\to 0,\,n\to\infty.
$$
Since for continuous functions $f, g$ the Skorokhod distance
$$
d(f, g)\leq \|f-g\|,
$$
then $x^n_n$   weakly converges to $x(\cdot, 1)$  in $D([0; 1]).$
The theorem is proved.
\end{proof}

\end{document}